\documentclass{amsart}
\usepackage[utf8]{inputenc}
\usepackage{amssymb}
\begin{document}

\hyphenation{def-i-ni-tion par-ti-tion par-ti-tion-able Def-i-ni-tion}

\title[Corrigendum Discr.\ Math.\ 22 (1978) 263]{Corrigendum to ``Paths and circuits in finite groups'', Discr.\ Math.\ 22 (1978) 263}

\author{Maximilian F. Hasler}
\urladdr{http://www.univ-ag.fr/~mhasler}
\email{mhasler@univ-ag.fr}
\address{CEREGMIA \& D.S.I., Universit\'e des Antilles, B.P. 7209, 97275 Schoelcher cedex, Martinique (F.W.I.)}

\author{Richard J. Mathar}
\urladdr{http://www.mpia.de/~mathar}
\email{mathar@mpia.de}
\address{Max Planck Institute of Astronomy, K\"onigstuhl 17, 69117 Heidelberg, Germany}

\subjclass[2010]{Primary 11P70; Secondary 05C45}
\keywords{Errata, prime partitionable, paths, circuits, cycles, hamiltonian}
\date{\today}

\newtheorem{definition}{Definition}
\newtheorem{remark}{Remark}
\newtheorem{theorem}{Theorem}
\newtheorem{lemma}{Lemma}
\newtheorem{example}{Example}
\def\P[#1]{\par\paragraph{\bf#1.}}
\def\set#1{\left\{#1\right\}}
\def\lcm{\mathrm{lcm}}

\begin{abstract}
We remove 52 from the set of prime partitionable numbers in a paper by Holszty\'nski and Strube (1978), 
which also appears in a paper by Erd\H{o}s and Trotter in the same year.
We establish equivalence between two different definitions in the two papers,
and further equivalence to the set of Erd\H{o}s-Woods numbers.
\end{abstract}

\maketitle

\section{The paper by Holszty\'nski and Strube}

In their paper about paths and circuits in finite groups,
Holszty\'nski and Strube define prime partitionable numbers as follows \cite[Defn. 5.3]{HS78}:

\begin{definition}
\label{D1}
An integer $n$ is said to be prime partitionable if there is a partition
$\{\mathbb{P}_1,\mathbb{P}_2\}$ of the set of all primes less than $n$ such that,
for all natural numbers $n_1$ and $n_2$ satisfying $n_1+n_2=n$, we have
$\gcd (n_1, p_1) > 1$ or $\gcd (n_2, p_2) > 1$, for some $(p_1,p_2)\in \mathbb{P}_1\times \mathbb{P}_2$.
\end{definition}

\begin{remark}
According to this definition, only integers $n\ge4$ can be prime partitionable, because
one cannot have a partition $\{\mathbb{P}_1,\mathbb{P}_2\}$ of the primes less than
$3$, since there is only one.
The condition involving the greatest common divisors (GCD's) can also be written as
``$p_1\mid n_1$ or $p_2\mid n_2$'', since the GCD is necessarily equal to the prime if it is larger than 1.
\end{remark}

\begin{example}
\label{E1}
The smallest prime partitionable number is 16. The set of primes less than 16 is $\mathbb{P} = \{2, 3, 5, 7, 11, 13\}$.
There are two partitions $\mathbb{P}=\{\mathbb{P}_1,\mathbb{P}_2\}$ that demonstrate that 16 is
prime partitionable, these are
\begin{equation}
  \set{ \mathbb{P}_1 = \set{2, 5, 11},~    \mathbb{P}_2 = \set{3, 7, 13}}
\end{equation}
and 
\begin{equation}
  \set{ \mathbb{P}_1 = \set{2, 3, 7, 13}, \mathbb{P}_2 = \set{5, 11}} ~.
\end{equation}
\end{example}

Following Definition~\ref{D1}, the authors give the following (wrong!) list of prime partitionable numbers,
\begin{equation}
  \{ 16, 22, 34, 36, 46, 52, 56, 64, 66, 70, ... \}
\label{wrong}
\end{equation}
which erroneously includes the 52.

The paper by Holszty\'nski and Strube is cited, 
and the wrong list \eqref{wrong} is explicitly reprinted, in a paper of Trotter and Erd\H{o}s \cite{TrottJGT2},
where they show that the Cartesian product of two directed cycles of respective lengths $n_1$ and $n_2$ 
is hamiltonian iff the GCD of the lengths is $d \ge 2$ and can be written as sum
$d=d_1+d_2$, where the $d_i$ are coprime to the respective lengths.

Using two independent computer programs written in Maple and PARI/GP  we verified
that the list of prime partitionable numbers according to Definition~\ref{D1} should rather be equal to
\begin{equation}
\{ 16, 22, 34, 36, 46, 56, 64, 66, 70, 76, 78, 86, 88, 92, 94, 96, 100, ... \}	~.
\end{equation}
(With contemporary computer power it is easier to inspect all possible 2-partitions
of the prime sets
beyond what could be done at the time of publication of the original paper.)

\begin{remark}
\label{Rqplusn}
If $n$ is prime partitionable with a representation $n=q^l+n_2$ where $q^l$ is a prime power with $l\ge 1$,
then at least one of the prime factors of $n_2$ is in the same set $\mathbb{P}_j$
as $q$.
[Proof: If $q$ is in $\mathbb{P}_1$, then $n=q^l+n_2$ is supported by $\gcd(n_1,q)>1$ 
on $\mathbb{P}_1$. The commuted $n=n_2+q^l$ is obviously not supported on $\mathbb{P}_2$
because $q$ is not member of $\mathbb{P}_2$, so it must be supported via $\gcd(n_2,p_1)>1$
through a prime member $p_1$ on $\mathbb{P}_1$; so $n_2$
must have a common prime factor with an element of $\mathbb{P}_1$. Alternatively, if $q$
is in $\mathbb{P}_2$, $n_2$ must have a common prime factor with an element of $\mathbb{P}_2$.
In both cases, one of the prime factors of $n_2$ is in the same partition $\mathbb{P}_j$ as $q$.]
\end{remark}

\begin{remark}
\label{R2}
As a special case of Remark~\ref{Rqplusn},
if $n$ is prime partitionable and $n=q_1+q_2$ with $q_1$ and $q_2$ both prime, 
then $q_1$ and $q_2$ are either both in $\mathbb{P}_1$ or both in $\mathbb{P}_2$. 
\end{remark}

\begin{remark}
As a corollary to Remark~\ref{Rqplusn}, if $n$ is one plus a prime power,
$n$ is not prime partitionable.
\end{remark}

\begin{remark}
\label{R2omega}
If $n$ is prime partitionable with a representation $n=1+q_1^{e_1}q_2^{e_2}$, one plus an integer
with two distinct prime factors, then the two prime factos $q_1$ and $q_2$ are
not in the same $\mathbb{P}_j$. The proof is elementary along the lines of Remark \ref{Rqplusn}.
$n=106=1+3\times 5\times 7$
and
$n=196=1+3\times 5\times 13$
are the
smallest prime partitionable numbers
not of that form.
\end{remark}

We see that $52$ is not prime partitionable given the following contradiction:
\begin{itemize}
\item
$52=5^2+3^3$ which forces $3$ and $5$ into the same $\mathbb{P}_j$
according to Remark~\ref{Rqplusn}: $\mathbb{P}_j=\set{3,5,\ldots}$.
\item
$52=3+7^2$ which forces $3$ and $7$ into the same $\mathbb{P}_j$
according to Remark~\ref{Rqplusn}: $\mathbb{P}_j=\set{3,5,7,\ldots}$.
\item
$52=17+5\times 7$ which requires that $5$ or $7$---which are already in the same $\mathbb{P}_j$---are in
the same $\mathbb{P}_j$ as $17$ according to Remark~\ref{Rqplusn}: $\mathbb{P}_j=\set{3,5,7,17,\ldots}$.
\item
$52=1+3\times 17$ which requires that $3$ and $17$ are in distinct sets
according to Remark~\ref{R2omega}.
\end{itemize}

In overview, we establish the equivalence between the set of prime partitionable numbers
of Definition \ref{D1} and two other sets,
the prime partitionable numbers of Trotter and Erd\H{o}s (Section \ref{sec.TE})
and the Erd\H{o}s--Woods numbers (Section \ref{sec.EW}).

\section{The Paper by Trotter and Erd\H{o}s}
\label{sec.TE}

\subsection{The Trotter--Erd\H{o}s definition}

Although the authors of \cite{TrottJGT2} cite the---at that time unpublished---work
of Holszty\'nski and Strube~\cite{HS78},
they give the following seemingly different definition of ``prime partitionable:'' 

\begin{definition}
\label{D2}
An integer $d$ is prime partitionable if there exist $n_1$, $n_2$ with $d=\gcd(n_1,n_2)$
so that for every $d_1,d_2 > 0$ with $d_1+d_2 = d$ either $\gcd(n_1,d_1)\ne 1$ or $\gcd(n_2,d_2)\ne 1$.
\end{definition}

\begin{example}
$d=16=2^4$ is prime partitionable according to Definition~\ref{D2}
based for example on $n_1=880=2^4\times 5\times 11$ and $n_2=4368=2^4\times 3\times 7\times 13$.
\end{example}

According to this definition, any integer $d < 2$ is ``vacuosly''
prime partitionable 
since there are no representations $d=d_1+d_2$ with  $d_1,d_2 > 0$.
So we tacitly understand $d>1$ in Definition~\ref{D2} when we talk about equivalence with Definition~\ref{D1}.

It is easy to see that
a ``witness pair'' $n_1$ and $n_2$ in Definition~\ref{D2}
has the following properties:
\begin{enumerate}
\def\labelenumi{(P\theenumi)}
\item
$d=\gcd(n_1,n_2)$ implies factorizations $n_1 = dk_1$, $n_2 = dk_2$ with $\gcd(k_1,k_2)=1$.
We still may have $\gcd(d,k_1)>1$ or $\gcd(d,k_2)>1$.
\item
One can construct derived ``witness pairs'' where $n_2$ is replaced by $n_2' = dk_2'$, 
where $k_2'$ is the product of all primes less than $d$ that do not divide $n_1$.
[Keeping $k_2'$ coprime to $n_1$ ensures that $\gcd(n_1,n_2')$ still
equals $d$.
The construction also ensures that the prime factor
set of $n_2'$ is a superset of the prime factor set of $n_2$,
so the $d_2$ in Definition~\ref{D2}
which need a common prime factor with $n_2$ still find that common prime factor with $n_2'$.]
$n_1$ may be substituted in the same manner.
\item
One can reduce a witness pair by striking prime powers
in $k_1$ and $k_2$ until $k_1$ and $k_2$
are square-free,
coprime to $d$, and contain only prime factors less than $d$.
\item
The integer $dk_1k_2$
does \emph{not} need to have a prime factor set that contains
all primes $\le d$.
(Example: $d=46=2\times 23$ is prime partitionable established by $k_1=3\times 19\times 37\times 43$
and
$k_2=5\times 7\times 11\times 13\times 17 \times 29\times 41$, where the prime $31$ is not
in the prime factor set of $dk_1k_2$).
The missing prime factors may be distributed across the $n_1$ and $n_2$ (effectively
multiplying $k_1$ or $k_2$ but not both with a missing prime) to generate ``fatter'' prime witnesses.
[$\gcd(n_1,n_2)$ does not change if either $n_1$ or $n_2$ is multiplied
with absent primes. Furthermore the enrichment of the prime factor sets of the $n_j$ 
cannot reduce the common prime factors in $\gcd(d_j,n_j)$. So the witness status is preserved.]
\item
Prime factors of $n_1$ or $n_2$ larger than $d$ have no impact on the
conditions $\gcd(n_j,d_j)\neq 1$ and can as well be removed from $n_j$ without
 compromising the witness status.
\end{enumerate}

\subsection{Equivalence With The Holszty\'nski--Strube Definition}
\begin{lemma}
If a number is prime partitionable according to Definition~\ref{D1}
it is prime partitionable according to Definition~\ref{D2}.
\end{lemma}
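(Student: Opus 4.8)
The plan is to convert the prime $2$-coloring furnished by Definition~\ref{D1} directly into a witness pair for Definition~\ref{D2}. Assume $n$ is prime partitionable in the sense of Definition~\ref{D1}, witnessed by a partition $\set{\mathbb{P}_1,\mathbb{P}_2}$ of the primes less than $n$. Put $d=n$, set $k_1=\prod_{p\in\mathbb{P}_1}p$ and $k_2=\prod_{p\in\mathbb{P}_2}p$, and define $n_1=dk_1$ and $n_2=dk_2$, following the notation of property (P1). I claim that $n_1,n_2$ witness that $d$ is prime partitionable in the sense of Definition~\ref{D2}, and the argument then reduces to two elementary checks.

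First I would check that $\gcd(n_1,n_2)=d$. Since $\mathbb{P}_1$ and $\mathbb{P}_2$ are disjoint, $k_1$ and $k_2$ are coprime and square-free, so for each prime $q$ a routine $q$-adic valuation count gives $\min(v_q(n_1),v_q(n_2))=v_q(d)$: if $q\nmid d$ then at most one of $k_1,k_2$ supplies a single factor $q$, so the minimum is $0=v_q(d)$, while if $q\mid d$ the extra factor of $q$ in whichever of $k_1,k_2$ carries it leaves the minimum equal to $v_q(d)$. Taking the product over all $q$ yields $\gcd(n_1,n_2)=d$. The only case needing attention is a prime factor of $n$ that is itself one of the colored primes, and the valuation bookkeeping disposes of it uniformly.

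Second, and this is the heart of the matter, I would verify the summand condition. Fix any $d_1,d_2>0$ with $d_1+d_2=d=n$. Reading this as a decomposition of $n$ covered by Definition~\ref{D1}, that definition supplies either a prime $p_1\in\mathbb{P}_1$ with $p_1\mid d_1$ or a prime $p_2\in\mathbb{P}_2$ with $p_2\mid d_2$. In the first case $p_1\mid k_1\mid n_1$, whence $\gcd(n_1,d_1)\ge p_1>1$; in the second $p_2\mid k_2\mid n_2$, whence $\gcd(n_2,d_2)>1$. Either alternative is exactly the requirement $\gcd(n_1,d_1)\ne1$ or $\gcd(n_2,d_2)\ne1$ of Definition~\ref{D2}, so the witness pair works.

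The main obstacle I anticipate is not computational but lies in lining up the two quantifier ranges: Definition~\ref{D1} quantifies over all decompositions of $n$ into natural numbers, whereas Definition~\ref{D2} demands the condition only for $d_1,d_2>0$. Since the strictly positive decompositions form a subset of those covered by Definition~\ref{D1}, the coloring guarantee transfers without loss and no degenerate split can cause trouble. Everything else reduces to the two divisibility checks above, and the lemma follows.
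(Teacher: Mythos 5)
Your proof is correct and follows essentially the same construction as the paper: take $n_j=d\,k_j$ with $k_j$ a product of the primes in $\mathbb{P}_j$, check $\gcd(n_1,n_2)=d$, and transfer the covering condition of Definition~\ref{D1} verbatim. The only (immaterial) difference is that the paper defines $k_j$ using only the primes of $\mathbb{P}_j$ not dividing $d$, which makes the gcd computation immediate, whereas you include all of them and compensate with the valuation count.
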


\begin{proof}
Assume that a partition $\{\mathbb{P}_1, \mathbb{P}_2\}$ 
establishes that $d$ is prime partitionable according to Definition~\ref{D1}.
Let $k_j$ be the product of all $p \in \mathbb{P}_j$ which do not divide $d$.
Then the pair $n_1=dk_1$, $n_2=dk_2$
satisfies the condition of Definition~\ref{D2}.
[Because the $k_j$ and $d$ are coprime, $d=\gcd(n_1,n_2)$. The prime factor sets
of $n_j$ are supersets of $\mathbb{P}_j$, so the $d_j$ find matching prime factors
according Definition \ref{D1}.]
\end{proof}

\begin{lemma}
If a number is prime partitionable according to Definition~\ref{D2}
it is prime partitionable according to Definition~\ref{D1}.
\end{lemma}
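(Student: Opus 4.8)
The plan is to read the partition required by Definition~\ref{D1} straight off the prime factorization of a single Definition~\ref{D2} witness, so that no appeal to the normalization in (P1)--(P5) is strictly needed. Suppose $d>1$ is prime partitionable according to Definition~\ref{D2}, with witnesses $n_1,n_2$ and $\gcd(n_1,n_2)=d$. I would set $\mathbb{P}_1=\{p<d:p\mid n_1\}$ and $\mathbb{P}_2=\{p<d:p\nmid n_1\}$. This is automatically a partition of the primes below $d$, and the claim I would argue is that it certifies Definition~\ref{D1}.

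Next I would take an arbitrary split $d=d_1+d_2$ with $d_1,d_2\ge1$ and feed it to Definition~\ref{D2}: it yields a prime $p$ with $p\mid n_1$ and $p\mid d_1$, or with $p\mid n_2$ and $p\mid d_2$. Since $p$ divides one of $d_1,d_2$, both of which are $<d$, we get $p<d$, so $p$ is one of the primes being partitioned. If $p\mid n_1$ and $p\mid d_1$, then $p\in\mathbb{P}_1$ divides $d_1$, which is exactly the Definition~\ref{D1} alternative, and this split is settled. The substantive work is in the other case, $p\mid n_2$ and $p\mid d_2$.

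The main obstacle will be precisely the primes that divide both $n_1$ and $n_2$, i.e.\ the primes dividing $d=\gcd(n_1,n_2)$, for which membership in $\mathbb{P}_1$ or $\mathbb{P}_2$ is a priori ambiguous. I would resolve it as follows. In the case $p\mid n_2$, $p\mid d_2$: if $p\nmid n_1$ then $p\in\mathbb{P}_2$ and $p\mid d_2$, giving Definition~\ref{D1}; and if $p\mid n_1$, then $p\mid\gcd(n_1,n_2)=d$, so from $p\mid d_2$ and $p\mid d$ I obtain $p\mid d_1=d-d_2$, whence $p\in\mathbb{P}_1$ and $p\mid d_1$, again giving Definition~\ref{D1}. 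Thus a shared prime always witnesses the split through $d_1$ as well as through $d_2$, so declaring every prime dividing $d$ to lie in $\mathbb{P}_1$ does no harm. Finally I would confirm the partition is genuine by testing the extreme splits: $d=(d-1)+1$ forces a prime $<d$ dividing $n_1$, so $\mathbb{P}_1\ne\emptyset$, while $d=1+(d-1)$ forces a prime $<d$ dividing $n_2$ and $d-1$, which cannot divide $n_1$ (else it would divide both $d$ and $d-1$) and hence lies in $\mathbb{P}_2$, so $\mathbb{P}_2\ne\emptyset$; here $d\ge3$, since $d=2$ admits no Definition~\ref{D2} witness.
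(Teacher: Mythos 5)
Your proof is correct and follows essentially the same route as the paper's: the partition is read off the prime factors of $n_1$, and the decisive step in both arguments is the observation that a prime dividing both $d=\gcd(n_1,n_2)$ and $d_2$ must also divide $d_1=d-d_2$, so the primes shared by $n_1$ and $n_2$ can safely all be assigned to $\mathbb{P}_1$. Your complement-based definition of $\mathbb{P}_2$ is a modest improvement in that it sidesteps the paper's reliance on the normalizations (P3)--(P4) to make $\{\mathbb{P}_1,\mathbb{P}_2\}$ a genuine partition of all primes below $d$, and your explicit check that both parts are nonempty (via the splits $d=(d-1)+1$ and $d=1+(d-1)$) supplies a detail the paper leaves implicit.
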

\begin{proof}
Assume that $n_1$, $ n_2$ is a witness pair
conditioned as in Definition~\ref{D2},
propped up with with the aid of (P4) if the union of their prime sets is incomplete.
Then let $\mathbb{P}_1$ contain all prime factors of $n_1$ smaller than $d$
and let $\mathbb{P}_2$ contain the prime factors of $n_2/d$ smaller than $d$:
This yields a partition $(\mathbb{P}_1,\mathbb{P}_2)$ with the properties of Definition~\ref{D1}.
Because we have (asymmetrically) assigned all the prime factors of $n_1$ to $\mathbb{P}_1$,
the decompositions of $d$ where $\gcd(d_1,n_1)>1$ find the associated $p_1$ in Definition~\ref{D1}.
The nontrivial part is to show that if $\gcd(d_1,n_1)=1$ and therefore $\gcd(d_2,n_2)>1$,
then $\gcd(d_2,p_2)>1$ for some $p_2 \in \mathbb{P}_2$. Observe that
a prime factor $p_2$ of $\gcd(d_2,n_2)$---known to exist for $\gcd(d_2,n_2)>1$---cannot divide $k_1$
(coprime to $n_2$); it cannot divide $d$ either, else it would also divide $d-d_2 = d_1$ and thus divide
$\gcd(d_1,n_1) = \gcd(d_1, d k_1)$, contradicting the assumption on the $\gcd(d_1,n_1)$.
So $p_2$ cannot divide the product $k_1d=n_1$ and therefore $p_2$ is in $\mathbb{P}_2$.
\end{proof}

We have shown in the two lemmas above that the two sets of prime partitionable numbers are mutually subsets
of each other; therefore they are equal:
\begin{theorem}
\label{T1}
An integer $d>3$ is prime partitionable in the sense of Definition~\ref{D1}
iff it is prime partitionable in the sense of Definition~\ref{D2}.
\end{theorem}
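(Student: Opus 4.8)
The plan is to obtain the theorem as an immediate consequence of the two preceding lemmas: the first shows that every number satisfying Definition~\ref{D1} also satisfies Definition~\ref{D2}, and the second shows the reverse, so the two sets of prime partitionable numbers are mutually contained and therefore equal for $d>3$. Thus the whole content resides in the two directions of inclusion, and I would organize the argument around the explicit constructions that translate a witness of one definition into a witness of the other.

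For the direction from Definition~\ref{D1} to Definition~\ref{D2}, I would start from a partition $\{\mathbb{P}_1,\mathbb{P}_2\}$ certifying the former and build the pair $n_j=dk_j$, where $k_j$ is the product of the primes in $\mathbb{P}_j$ that do not already divide $d$. Since $k_1$ and $k_2$ are built from disjoint prime sets and both are coprime to the residual factor $d$, one gets $\gcd(n_1,n_2)=d$; moreover each prime of $\mathbb{P}_j$ divides $n_j$, so any split $d=d_1+d_2$ that the partition resolves through a prime $p_j\in\mathbb{P}_j$ is still resolved by $\gcd(n_j,d_j)>1$. This direction is routine bookkeeping.

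For the converse I would take a witness pair $n_1,n_2$ for Definition~\ref{D2}, first applying property (P4) to enrich it so that every prime below $d$ occurs among the prime factors of $n_1n_2$, and then define the partition \emph{asymmetrically}: let $\mathbb{P}_1$ be all prime factors of $n_1$ below $d$ and let $\mathbb{P}_2$ be the remaining primes below $d$. Splits handled by $\gcd(d_1,n_1)>1$ are then immediate, because every prime factor of $n_1$ has been placed in $\mathbb{P}_1$.

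The hard part will be the opposite case, $\gcd(d_1,n_1)=1$, where Definition~\ref{D2} forces $\gcd(d_2,n_2)>1$ and I must check that the guaranteed common prime $p_2$ of $d_2$ and $n_2$ really lies in $\mathbb{P}_2$ and not in $\mathbb{P}_1$. The crux is a short coprimality analysis: $p_2$ cannot divide $k_1$ (which is coprime to $n_2$), and it cannot divide $d$ either, for otherwise it would divide $d_1=d-d_2$ and hence $\gcd(d_1,dk_1)=\gcd(d_1,n_1)$, contradicting $\gcd(d_1,n_1)=1$. Therefore $p_2\nmid dk_1=n_1$, so $p_2$ was not assigned to $\mathbb{P}_1$ and must sit in $\mathbb{P}_2$, which closes the argument and, together with the forward inclusion, yields the claimed equivalence.
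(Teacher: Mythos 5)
Your proposal is correct and follows essentially the same route as the paper: the theorem is obtained as the conjunction of the two inclusion lemmas, with the same witness construction $n_j=dk_j$ for the forward direction and the same asymmetric partition plus the coprimality argument ($p_2\nmid k_1$ and $p_2\nmid d$, hence $p_2\nmid n_1$) for the converse.
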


\begin{remark}
This also implies that if a number is \emph{not} prime partitionable according
to one of the definitions, it neither is according to the other.
\end{remark}

This proof establishes a bijection, for any given $d>3$, between (i) 2-partitions of
the primes less than $d$ and (ii) pairs $(n_1,n_2)$ with the properties that
$d=\gcd(n_1,n_2)$, $k_1=n_1/d$ and $k_2=n_2/d$ square-free and coprime
to $d$ and without prime factors $> d$.

\section{Erd\H{o}s--Woods numbers}
\label{sec.EW}

This is the definition given for sequence A059756 of the 
OEIS \cite{sloane}, entitled ``Erd\H{o}s--Woods numbers:''

\begin{definition}
\label{D3}
An Erd\H{o}s--Woods number is a positive integer $w=e_2-e_1$ which
is the length of an interval of consecutive integers
$[e_1,e_2]=\set{k\in\mathbb{N}\mid e_1\le k\le e_2}$ such that every element $k$
has a factor in common with one of the end points, $e_1$ or $e_2$. 
\end{definition}

\begin{example}
The smallest, $w=16$, refers to the interval $[2184, 2185, ..., 2200]$;
others are listed in \cite[A194585]{sloane}.
The end points are $e_1=2184 = 2^3\times 3 \times 7 \times 13$ 
and $e_2=2200 = 2^3 \times 5^2 \times 11$, and each number $2184\le k\le 2200$
has at least one prime factor in the set $\set{2,3,5,7,11,13}$.
\end{example}

Woods was the first to consider this definition \cite{Woods2013},
Dowe proved that there are infinitely many~\cite{Dowe} and
Cegielski, Heroult and Richard have shown that the set is recursive~\cite{CHR}. 

\begin{lemma}
An Erd\H{o}s--Woods number is prime partitionable according to Definition~\ref{D1}.
\end{lemma}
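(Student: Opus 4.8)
The plan is to build the Holszty\'nski--Strube partition $\{\mathbb{P}_1,\mathbb{P}_2\}$ required by Definition~\ref{D1} directly out of the endpoints of the Erd\H{o}s--Woods interval. Given the interval $[e_1,e_2]$ of length $w=e_2-e_1$, I would declare $\mathbb{P}_1$ to be the set of primes $p<w$ with $p\mid e_1$, and $\mathbb{P}_2$ to be the set of the remaining primes $p<w$ (those with $p\nmid e_1$). This is manifestly a partition of all primes below $w$: a prime dividing both endpoints is absorbed into $\mathbb{P}_1$, while a prime dividing neither endpoint lands harmlessly in $\mathbb{P}_2$, where it will never be called upon.

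The decisive observation is that the additive decompositions of $w$ are in bijection with the interior points of the interval: a pair $n_1+n_2=w$ with $n_1,n_2\ge 1$ corresponds to $k=e_1+n_1=e_2-n_2$, which satisfies $e_1<k<e_2$. First I would feed this $k$ into the defining property of an Erd\H{o}s--Woods number, which guarantees that $k$ has a common factor with $e_1$ or with $e_2$; picking a prime $p$ from such a common factor splits into two cases. If $p\mid\gcd(k,e_1)$, then $p\mid k-e_1=n_1$ and $p\mid e_1$, so $p\in\mathbb{P}_1$. If instead $\gcd(k,e_1)=1$ while $p\mid\gcd(k,e_2)$, then $p\mid e_2-k=n_2$ and $p\nmid e_1$, so $p\in\mathbb{P}_2$. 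In either case the support condition of Definition~\ref{D1} is met by a prime of $\mathbb{P}_1$ dividing $n_1$ or a prime of $\mathbb{P}_2$ dividing $n_2$.

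The one point that needs care is that the witnessing prime $p$ must actually lie below $w$, so that it genuinely belongs to the partition. This is automatic: since $p$ divides $n_1$ (resp.\ $n_2$) and $1\le n_1,n_2\le w-1$, we get $p\le w-1<w$. I expect this bookkeeping---checking that the single prime emerging from the common-factor condition is simultaneously $<w$ and in the class matching the endpoint it came from---to be the only genuinely delicate step; the rest is a mechanical translation of ``$\gcd(k,e_i)>1$'' into ``$p\mid n_i$''.

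Finally I would confirm that $\{\mathbb{P}_1,\mathbb{P}_2\}$ is a genuine two-part partition rather than a degenerate one, by testing the extreme decompositions. For $n_1=1$ the point $k=e_1+1$ is coprime to $e_1$, which forces a supporting prime of $e_2$ (necessarily not dividing $e_1$) into $\mathbb{P}_2$; for $n_2=1$ the point $k=e_2-1$ is coprime to $e_2$, forcing a supporting prime of $e_1$ into $\mathbb{P}_1$. Hence both classes are nonempty, and every decomposition $w=n_1+n_2$ is supported, which is exactly what Definition~\ref{D1} demands.
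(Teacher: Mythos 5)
Your proposal is correct and follows essentially the same route as the paper: build the partition from the prime divisors of the endpoints, identify each decomposition $w=n_1+n_2$ with the interior point $k=e_1+n_1$, and convert the common-factor condition $\gcd(k,e_i)>1$ into $p\mid n_i$ with $p$ in the matching class. Your version is slightly tidier in two respects the paper glosses over---explicitly verifying that the witnessing prime is $<w$ (via $p\mid n_i\le w-1$) and that both parts of the partition are nonempty---but these are refinements of the same argument, not a different one.
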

\begin{proof}
Place all prime factors of $e_1$ into a set $\mathbb{P}_1$
and all prime factors of $e_2$ that are not yet in $\mathbb{P}_1$ into $\mathbb{P}_2$.
This guarantees that the two $\mathbb{P}_j$ do not overlap.
Place any primes $<w$
that are missing in one of the $\mathbb{P}_j$.
Taking
$n_1=k-e_1$, $n_2=e_2-k$,
each $k$ defines 1-to-1 a
partition $w=n_1+n_2$.
If $k$ has a common prime factor with $e_1$, say $p_1$, then $p_1\in \mathbb{P}_1$
and then $n_1$ has a non-trivial integer factorization $n_1=p_1(k/p_1-e_1/p_1)$,
which satisfies the $\gcd(w,p_1)>1$ requirement of Definition \ref{D1} via $\mathbb{P}_1$.
If $k$ has a common prime factor with $e_2$, say $p_2$, then $p_2$ is either
in $\mathbb{P}_1$ (as a common prime factor of $e_1$ and $e_2$) or
in $\mathbb{P}_2$.
The subcases where $p_2$ is in $\mathbb{P}_1$ lead again to 
the non-trivial integer factorization $n_1=p_2(k/p_2-e_1/p_2)$, satisfying
the requirement of Definition \ref{D1} via $\mathbb{P}_1$;
the subcases where $p_2$ is in $\mathbb{P}_2$ lead to 
the non-trivial integer factorization $n_2=p_2(e_2/p_2-k/p_2)$, satisfying
Definition \ref{D1} via $\mathbb{P}_2$.
\end{proof}

\begin{lemma}
A prime partitionable number laid out in Definition~\ref{D1} is an Erd\H{o}s--Woods number.
\end{lemma}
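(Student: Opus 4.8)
The goal is to show that a prime partitionable number $w$ (Definition~\ref{D1}) is an Erdős–Woods number, i.e., that there exists an interval $[e_1,e_2]$ of length $w$ whose every interior integer shares a prime factor with one of the endpoints. The plan is to construct the endpoints directly from the partition $\{\mathbb{P}_1,\mathbb{P}_2\}$ using the Chinese Remainder Theorem. I would prepare for this by invoking Theorem~\ref{T1} together with the witness-pair machinery: the partition yields a witness pair $n_1=dk_1$, $n_2=dk_2$ in the sense of Definition~\ref{D2}, where I may take $k_1$ (resp.\ $k_2$) to be the product of the primes in $\mathbb{P}_1$ (resp.\ $\mathbb{P}_2$) not dividing $d=w$. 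Here $\gcd(n_1,n_2)=w$, and by property (P4) I may assume every prime less than $w$ divides $n_1n_2$.

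The key construction is to pin down $e_1$ and $e_2$ so that, writing $k$ for a generic interior integer, $n_1:=k-e_1$ and $n_2:=e_2-k$ recover exactly the witness pair when $k$ ranges over $[e_1,e_2]$. First I would set $e_1$ and $e_2$ so that $e_2-e_1=w$ and so that $e_1\equiv 0$ modulo every prime in $\mathbb{P}_1$ and $e_2\equiv 0$ modulo every prime in $\mathbb{P}_2$; since $\mathbb{P}_1$ and $\mathbb{P}_2$ are disjoint, the moduli are coprime and CRT guarantees a simultaneous solution. (One must also check consistency of the two congruence systems against the constraint $e_2=e_1+w$, which is where a little care with primes dividing $w$ itself is needed.) With such endpoints in hand, each interior $k$ gives $n_1=k-e_1$ and $n_2=e_2-k$ with $n_1+n_2=w$, so Definition~\ref{D1} supplies a prime $p_1\in\mathbb{P}_1$ with $p_1\mid n_1$ or a prime $p_2\in\mathbb{P}_2$ with $p_2\mid n_2$. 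In the first case $p_1\mid k-e_1$ and $p_1\mid e_1$ (by choice of $e_1$), so $p_1\mid k$, giving $k$ a common factor with $e_1$; symmetrically in the second case $p_2\mid k$ and $p_2\mid e_2$, so $k$ shares a factor with $e_2$. Thus every interior $k$ meets an endpoint, and $w$ is an Erdős–Woods number.

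The main obstacle is arranging the congruences for the endpoints consistently while keeping $e_2-e_1$ exactly $w$. The subtlety is that a prime $p$ may divide $w$ itself: if $p\in\mathbb{P}_1$ divides both $e_1$ and $w$, then it automatically divides $e_2=e_1+w$ as well, and I must verify this does not conflict with the disjoint assignment of primes to $\mathbb{P}_1$ and $\mathbb{P}_2$. The clean way to handle this is to observe that Definition~\ref{D1} only concerns primes strictly less than $w$ and only cares about divisibility of the summands $n_1,n_2$, so I can require $e_1\equiv 0\pmod{p}$ for $p\in\mathbb{P}_1$ and $e_1\equiv w\pmod{p}$ for $p\in\mathbb{P}_2$ (equivalently $e_2\equiv 0$); the two systems then live on the single unknown $e_1$ with pairwise coprime prime moduli, and a single application of CRT produces $e_1$ (and hence $e_2=e_1+w$). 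Finally I would confirm that the endpoints themselves carry the full prime sets — i.e.\ that $e_1$ is divisible by every prime of $\mathbb{P}_1$ and $e_2$ by every prime of $\mathbb{P}_2$ — which is immediate from the CRT solution, completing the argument.
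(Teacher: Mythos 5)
Your proof is correct and takes essentially the same approach as the paper: both construct $e_1$ via the Chinese Remainder Theorem so that every prime of $\mathbb{P}_1$ divides $e_1$ and every prime of $\mathbb{P}_2$ divides $e_2=e_1+w$, then read off the required common factor from the decomposition $w=(k-e_1)+(e_2-k)$. Your formulation (one congruence per prime, rather than one per position $j$ with a subsequent consistency check) is a slightly cleaner packaging of the same argument, and the opening paragraph about witness pairs and (P4) is not actually needed.
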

\begin{proof}
The prime partitionable number $n$ of Definition~\ref{D1}
defines for each $1\le j<n$ a prime $p_j$ which is
either the associated $p_1\in \mathbb{P}_1$ if $\gcd(j,p_1)>1$
or the associated $p_2\in \mathbb{P}_2$ if $\gcd(n-j,p_2)>1$ (or any of the two
if both exist). The associated Erd\H{o}s--Woods
interval with lower limit $e_1$ exists because the set of modular equations
$e_1+j \equiv 0\pmod {p_j}$,
$1\le j<n$, can be solved with the Chinese Remainder Theorem \cite{OreAMM59,FraenkelPAMS14}. Duplicates of
the equations are removed, so there may be less than $n-1$ equations if some $p_j$ appear more than
once on the right hand sides. [Note that each $p_j$ is associated with
matching $j$ in the equations, meaning for fixed $p_j$ all the $j$ are in the same modulo class
because they have essentially been \emph{fixed} in Definition~\ref{D1} demanding
$j\equiv 0 \pmod {p_j}$.
No contradicting congruences arise in the set of modular equations.]
\end{proof}
Having shown the two-way mutual inclusion in the two lemmas and joining
in Theorem~\ref{T1} yields:
\begin{theorem}
A number is an Erd\H{o}s--Woods number iff it is prime partitionable according to Definition \ref{D1}
(and therefore iff it is prime partitionable according to Definition \ref{D2}).
\end{theorem}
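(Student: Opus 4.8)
The plan is to prove the stated biconditional by establishing two set inclusions and then importing Theorem~\ref{T1} to carry the equivalence over to Definition~\ref{D2}. Writing $\mathcal{E}$ for the set of Erd\H{o}s--Woods numbers and $\mathcal{H}$ for the prime partitionable numbers of Definition~\ref{D1}, it suffices to show $\mathcal{E}\subseteq\mathcal{H}$ and $\mathcal{H}\subseteq\mathcal{E}$, which together force $\mathcal{E}=\mathcal{H}$; these are precisely the two preceding lemmas, so the final assembly is a matter of combining them.

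For the inclusion $\mathcal{E}\subseteq\mathcal{H}$ I would start from an Erd\H{o}s--Woods interval $[e_1,e_2]$ of length $w=e_2-e_1$ and build a partition $\{\mathbb{P}_1,\mathbb{P}_2\}$ by placing the prime factors of $e_1$ into $\mathbb{P}_1$, the remaining prime factors of $e_2$ into $\mathbb{P}_2$, and scattering any still-missing primes below $w$ arbitrarily. Under the substitution $n_1=k-e_1$, $n_2=e_2-k$ the defining hypothesis that every interior $k$ shares a factor with an endpoint becomes the gcd condition of Definition~\ref{D1}; the only subtlety is bookkeeping for a prime dividing both endpoints, which by convention lands in $\mathbb{P}_1$ and is then serviced on the $n_1$ side.

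For the reverse inclusion $\mathcal{H}\subseteq\mathcal{E}$ I would invert this correspondence. For each $1\le j<n$, Definition~\ref{D1} singles out a prime $p_j$, and the goal is to find a lower endpoint $e_1$ with $e_1+j\equiv 0\pmod{p_j}$ for all $j$, so that each interior point $k=e_1+j$ shares the factor $p_j$ with an endpoint and the resulting interval of length $n$ witnesses the Erd\H{o}s--Woods property. Such an $e_1$ is produced by the Chinese Remainder Theorem \emph{provided the congruence system is consistent}.

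I expect this consistency check to be the main obstacle. A single prime $p$ may play the role of $p_j$ for several values of $j$, imposing several congruences $e_1\equiv -j\pmod{p}$ on one modulus; these are simultaneously solvable only if all the associated $j$ agree modulo $p$. The resolution is structural rather than computational: Definition~\ref{D1} attaches $p_j$ to $j$ exactly through $p_j\mid j$ (the $\mathbb{P}_1$ case) or $p_j\mid n-j$ (the $\mathbb{P}_2$ case), which pins every $j$ sharing a given prime into a single residue class modulo that prime, so duplicate congruences never conflict. After deleting duplicates the CRT applies and delivers the interval. Having established both inclusions, the equivalence $\mathcal{E}=\mathcal{H}$ follows, and chaining through Theorem~\ref{T1} (Definition~\ref{D1} $\Leftrightarrow$ Definition~\ref{D2}) yields the parenthetical equivalence with Definition~\ref{D2}.
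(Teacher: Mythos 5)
Your proposal matches the paper's own route exactly: the paper proves precisely these two inclusions as the two lemmas preceding the theorem (the partition built from the prime factors of the endpoints in one direction, and the Chinese Remainder Theorem with the same consistency observation about each prime fixing its $j$'s residue class in the other), and then assembles them together with Theorem~\ref{T1}. The argument is correct and essentially identical to the paper's.
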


\bibliographystyle{amsplain}
\bibliography{all}
\end{document}